\UseAllTwocells \xyoption{frame} \CompileMatrices
\newtheorem{theorem}[subsection]{Theorem}
\newtheorem{corollary}[subsection]{Corollary}
\newtheorem{lemma}[subsection]{Lemma}
\newtheorem{assump}[subsection]{Assumption}
\newtheorem{proposition}[subsection]{Proposition}
\theoremstyle{definition}
\theoremstyle{remark}
\theoremstyle{remark}
\newtheorem{remark}[subsection]{Remark}
\numberwithin{equation}{section}
\newcommand{\com}{\mathbb{C}}
\newcommand{\tpi}{\tilde{\pi}}
\newcommand{\X}{\mathcal{X}}
\newcommand{\Y}{\mathcal{Y}}
\newcommand{\Z}{\mathcal{Z}}
\newcommand{\V}{\mathcal{V}}
\newcommand{\sI}{\mathcal{I}}
\newcommand{\bone}{\mathbf{1}}
\newcommand{\pbar}{\bar{p}}
\def\<{\left\langle}
\def\>{\right\rangle}
\begin{document}

\title{Chern classes of Deligne-Mumford stacks and their coarse moduli spaces}
\author{Hsian-Hua Tseng}
\address{Department of Mathematics\\ University of Wisconsin-Madison\\ Van Vleck Hall, 480 Lincoln Drive\\ Madison, WI 53706-1388\\ USA}
\email{tseng@math.wisc.edu}

\date{\today}

\begin{abstract}
Let $X$ be a complex projective algebraic variety with Gorenstein quotient singularities and $\X$ a smooth Deligne-Mumford stack having $X$ as its coarse moduli space. We show that the CSM class $c^{SM}(X)$ coincides with the pushforward to $X$ of the total Chern class $c(T_{I\X})$ of the inertia stack $I\X$. We also show that the stringy Chern class $c_{str}(X)$ of $X$, whenever is defined, coincides with the pushforward to $X$ of the total Chern class $c(T_{II\X})$ of the double inertia stack $II\X$. Some consequences concerning stringy/orbifold Hodge numbers are deduced.

\end{abstract}

\maketitle

\section{Introduction}
Let $X$ be a complex projective algebraic variety with Gorenstein quotient singularities. In attempt to associate invariants to $X$, there are at least two possible approaches: one can either view $X$ as a {\em singular} variety by itself or view $X$ as the coarse moduli space of a {\em smooth} Deligne-Mumford stack $\X$. Viewing as a singular variety we have the CSM class $c^{SM}(X)$ naturally associated to $X$. An important property of CSM class is that its degree is equal to the topological Euler characteristic of $X$:
$$\chi(X)=\int_X c^{SM}(X).$$
The starting point of this note is the observation that $\chi(X)$ is equal to the Euler characteristic $\chi(I\X)$ of the {\em inertia stack} $I\X$. By Gauss-Bonnet formula for Deligne-Mumford stacks we know that $$\chi(I\X)=\int_{I\X}c_{top}(T_{I\X}),$$ where $c_{top}(T_{I\X})$ is the top Chern class of the tangent bundle $T_{I\X}$ of the inertia stack $I\X$. Therefore the degrees of $c^{SM}(X)$ and $c(T_{I\X})$ are the same. The first result of this note, Theorem \ref{csm_class}, says that this equality in fact holds for classes themselves: the pushforward to $X$ of $c(T_{I\X})$ is equal to $c^{SM}(X)$. This is a simple consequence of a comparison of the characteristic functions $\bone_X, \bone_{I\X}$ and MacPherson's natural transformation for Deligne-Mumford stacks \cite{o}.

Under some natural assumption on the singularities of $X$, such as being log terminal, one can define (\cite{dflnu}, \cite{alu}) the {\em stringy Chern class} $c_{str}(X)$ for $X$. In view of the above, it is reasonable to hope that $c_{str}(X)$ can be expressed using some kind of Chern class for the stack $\X$. We show (Theorem \ref{stringy_class}) that $c_{str}(X)$ is equal to the pushforward to $X$ of the total Chern class of the tangent bundle of the {\em double inertia stack} $II\X$ of $\X$. This implies some formulas for stringy Hodge numbers.

It is known (see e.g. \cite{ya}) that if $\X$ and $X$ are {\em K-equivalent}, i.e. the natural map $\pi:\X\to X$ is birational and we have $K_\X=\pi^*K_X$, then stringy Hodge numbers of $X$ coincide with orbifold Hodge numbers of $\X$. Together with the above results we find some formulas for orbifold Hodge numbers. In particular we prove in Proposition \ref{orb_numbers} a conjecture in \cite{jt}.

\section*{Acknowledgment}
The author thanks P. Aluffi and K. Behrend for helpful discussions, and referees for useful suggestions. This work is done during a stay at Institut Mittag-Leffler (Djursholm, Sweden). It is a pleasure to acknowledge their hospitality and support. In addition the author is supported in part by NSF grant DMS-0757722.

\section{Preliminaries}
We work over $\com$. Throughout the paper we will make the following assumption.

\begin{assump}\label{1st_assumption}
$\X$ is a smooth separated Deligne-Mumford stack of finite type over $\com$. Its coarse moduli space $X$ is a projective variety of finite type over $\com$. The structure map is denoted by $\pi:\X\to X$.
\end{assump}
From the scheme theory perspective, Assumption \ref{1st_assumption} means that $X$ is a projective variety of finite type with quotient singularities, and $\X$ is a smooth separated Deligne-Mumford stack having the (singular) variety $X$ as its coarse moduli space.

For an $\X$ as in Assumption \ref{1st_assumption} let $T_\X$ be the tangent bundle of $\X$. By definition $T_\X$ is a vector bundle over $\X$. As a locally free sheaf, $T_\X$ is defined to be the dual of the sheaf $\Omega^1_\X$ of differentials on $\X$. See e.g. \cite{vis}, 7.20 (ii) for the definition of $\Omega^1_\X$. The paper \cite{vis} also constructs the theory of Chow groups (with rational coefficients) for Deligne-Mumford stacks. In particular the theory of Chern classes is constructed there. Given a vector bundle $\V$ over $\X$, we have the total Chern class of $\V$ which we denote by $c(\V)$. The class $c(\V)$ belongs to $A^*(\X)_\mathbb{Q}$, the Chow group of $\X$ with $\mathbb{Q}$-coefficients. In particular, we write $c(T_\X)\in A^*(\X)_\mathbb{Q}$ for the total Chern class of the tangent bundle $T_\X$.

Consider a Deligne-Mumford stack of the form\footnote{Stacks of this form are called quotient stacks.} $\X=[U/G]$ where $U$ is a smooth scheme and $G$ is a linear algebraic group. The paper \cite{eg} constructs a theory of equivariant Chow groups (with integer coefficients) for the $G$-action on $U$, denoted by $A^G_*(U)$. By \cite{eg}, Proposition 14, we have $$A^G_*(U)\otimes \mathbb{Q}=A^{\text{dim} U-*}(\X)_\mathbb{Q}.$$
The tangent bundle $T_U$ is a $G$-equivariant vector bundle on $U$. The construction of \cite{eg}, Section 2.4 associates to $T_U$ its equivariant total Chern classes $c^G(T_U)\in A_*^G(U)\otimes \mathbb{Q}$. Under the above identification of Chow groups, we have $c^G(T_U)=c(T_\X)$. 

\begin{remark} 
We may consider the equivariant Chern class $c^G(T_U)$ as a class in the equivariant cohomology $H^*_G(U)\otimes \mathbb{Q}$ by using the cycle map.
\end{remark}

In the paper we make heavy use of the theory of constructible functions on Deligne-Mumford stacks. Our reference for this is \cite{j}, to which we refer the readers for a detailed treatment of of this. Below we recall some aspects of the theory.

Let $\X$ be a Deligne-Mumford stack as in Assumption \ref{1st_assumption}. Denote by $\X(\com)$ the set of $\com$-points of $\X$. By \cite{j}, Definition 4.1, a subset of $\X(\com)$ is {\em constructible} if it is a finite union of sets of the form $\X_i(\com)$ where each $\X_i$ is a finite type substack of $\X$. A function $\phi: \X(\com)\to \mathbb{Q}$ is called {\em constructible} if $\phi(\X(\com))$ is finite and $\phi^{-1}(c)\subset \X(\com)$ is constructible for any $c\in \phi(\X(\com))\setminus \{0\}$, see \cite{j}, Definition 4.3. Denote by $CF(\X)$ the group of constructible functions on $\X$. For a constructible set $C\subset \X(\com)$ define its {\em characteristic function} $\bone_C:\X(\com)\to \mathbb{Q}$ by 
$$\bone_C(c)=\begin{cases}
1 \quad \text{if } c\in C,\\
0 \quad \text{if } c\notin C.
\end{cases}$$
Clearly $\bone_C$ is constructible, and $CF(\X)$ is additively generated by characteristic functions. Define the function $\bone_\X: \X(\com)\to \mathbb{Q}$ to be $\bone_\X:=\bone_{\X(\com)}$. 

Let $f: \X\to \Y$ be a proper morphism of Deligne-Mumford stacks. In \cite{j}, Definition 4.17 (a), the notion of ``stack pushforward'' by $f$ is defined. This notion of pushforward, which we simply call {\em pushforward} and denote by $f_*$, will be used in this paper. We recall its definition as follows. Define a function $e_\X: \X(\com)\to \mathbb{Q}$ by $e_\X(c):= |G_c|$, where $|G_c|$ is the order of the isotropy group $G_c$ at the point $c\in \X(\com)$. A function $e_\Y: \Y(\com)\to \mathbb{Q}$ is similarly defined. Let $\phi: \X(\com)\to \mathbb{Q}$ be a constructible function. Define $f_*\phi: \Y(\com)\to \mathbb{Q}$ by 
\begin{equation}\label{pushforward}
f_*\phi(t):= e_\Y(t)\chi(\X(\com), \frac{1}{e_\X}\phi\bone_{f^{-1}(t)}), \quad t\in \Y(\com),
\end{equation}
where $\chi(-,-)$ is the weighted Euler characteristic as in \cite{j}, Definition 4.8. As pointed out in \cite{j}, page 599, since we work with Deligne-Mumford stacks, the pushforward $f_*$ is always defined. By \cite{j}, Corollary 4.14, the pushforard satisfies functoriality: $(f\circ g)_*= f_* g_*$.

\begin{lemma}\label{etale_map}
Let $f: \X\to \Y$ be a finite proper representable \'etale morphism of Deligne-Mumford stacks. Then the following equality of constructible functions hold:
$f_*\bone_\X=(\text{deg}\, f)\bone_{f(\X)}$.
\end{lemma}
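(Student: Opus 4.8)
The plan is to verify the asserted identity of constructible functions on $\Y$ by evaluating both sides at an arbitrary point $t\in\Y(\com)$ and unwinding the definition \eqref{pushforward}; this is legitimate since a constructible function is determined by its values, and the right-hand side is genuinely constructible because $f(\X)$ is open and closed in $\Y$ (it is the image of the finite \'etale morphism $f$). We may assume $\Y$ connected, so that $d:=\deg f$ is a single integer. First I would dispose of the trivial case: if $t\notin f(\X)$ then $f^{-1}(t)\subset\X(\com)$ is empty and both sides vanish at $t$. It thus remains to show $f_*\bone_\X(t)=d$ for every $t\in f(\X)(\com)$.

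Fix such a $t$ and write $G_t$ for its automorphism group and $G_c$ for that of a point $c\in\X(\com)$. Since $f$ is finite, $f^{-1}(t)$ is a finite set, and the weighted Euler characteristic of a constructible function supported on a finite set is simply the sum of its values; hence \eqref{pushforward} yields
$$f_*\bone_\X(t)=e_\Y(t)\,\chi\!\left(\X(\com),\tfrac{1}{e_\X}\bone_{f^{-1}(t)}\right)=|G_t|\sum_{c\in f^{-1}(t)}\frac{1}{|G_c|}.$$
So the lemma reduces to the elementary identity $\sum_{c\in f^{-1}(t)}|G_c|^{-1}=d/|G_t|$.

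To prove the latter I would analyze the geometric fiber. As $f$ is representable, the $2$-fibered product $S:=\X\times_{\Y,t}\text{Spec}\,\com$ is an algebraic space; being finite and \'etale over $\text{Spec}\,\com$, it is a disjoint union of exactly $d$ reduced points. The group $G_t=\text{Aut}_\Y(t)$ acts on $S$ over $\X$, the fiber $f^{-1}(t)$ is canonically the orbit set $S/G_t$, and for a point $s\in S$ over $c\in f^{-1}(t)$ the injection $\text{Aut}_\X(c)\hookrightarrow G_t$ coming from representability has image precisely $\text{Stab}_{G_t}(s)$. The orbit--stabilizer theorem then gives
$$\sum_{c\in f^{-1}(t)}\frac{1}{|G_c|}=\sum_{\text{orbits }O\subset S}\frac{1}{|\text{Stab}_{G_t}(s_O)|}=\sum_{\text{orbits }O\subset S}\frac{|O|}{|G_t|}=\frac{|S|}{|G_t|}=\frac{d}{|G_t|},$$
and feeding this back gives $f_*\bone_\X(t)=|G_t|\cdot(d/|G_t|)=d$, completing the argument.

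The main obstacle is the middle of the fiber analysis: making precise that $S$ is a $d$-element set carrying a $G_t$-action, and --- the crucial point --- that representability of $f$ identifies $\text{Aut}_\X(c)$ with $\text{Stab}_{G_t}(s)$ rather than merely embedding it in $G_t$. (Equivalently one may base-change $f$ along the residual gerbe $BG_t\hookrightarrow\Y$ and invoke that a finite \'etale representable stack over $BG_t$ has the form $[S/G_t]$ with $|S|=d$, its $\com$-points being the $G_t$-orbits on $S$ with automorphism groups the point stabilizers; this is the same computation in disguise.) The remaining ingredients --- the reduction to a pointwise check, the value of the weighted Euler characteristic on a finite support, and the orbit count --- are routine.
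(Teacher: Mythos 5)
Your proof is correct and follows essentially the same route as the paper: evaluate $f_*\bone_\X$ at a point $t$ via the definition (\ref{pushforward}), reducing to the identity $e_\Y(t)\sum_{c\in f^{-1}(t)}1/e_\X(c)=\deg f$. The only difference is that the paper asserts this last equality without comment, whereas you justify it by the fiber/orbit--stabilizer analysis, which is a correct filling-in of that step.
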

\begin{proof}
For a geometric point $y\in f(\X(\com))$ we may write $f^{-1}(y)=\cup_{i\in I} x_i$, where $x_i\in \X(\com)$ and $I$ is a finite set. We have $f_*\bone_\X (y)=e_\Y(y)\sum_{i\in I} \frac{1}{e_\X(x_i)}$, which is equal to $\text{deg}\,f$.
\end{proof}

Consider again a Deligne-Mumford quotient stack $\X=[U/G]$ where $U$ is a smooth scheme and $G$ is a linear algebraic group. In \cite{o} the notion of constructible functions for this kind of stacks is also defined. By definition (see \cite{o}, Section 3.4) a constructible function on $\X$ is a $G$-invariant constructible function on $U$. Let $CF^G_{inv}(U)$ be the group\footnote{In \cite{o} this group is denoted by $\mathcal{F}^{G}_{inv}(U)$.} of $G$-invariant constructible functions on $U$. By \cite{o}, Lemma 3.3, this group is independent of the presentation of $\X$ as a quotient. Given a finite type substack $\Z\subset \X$, define $U_\Z:= \Z\times_\X U\subset U$. It is easy to see that the map $\bone_{\Z(\com)}\mapsto \bone_{U_\Z(\com)}$ defines an isomorphism 
\begin{equation}\label{CF_identification}
CF(\X)\simeq CF^G_{inv}(U). 
\end{equation}

It is also straightforward to check that under the identification above, the notion of pushforward for $CF^G_{inv}(U)$, as defined in \cite{o}, Section 2.6, coincides with the pushforward in (\ref{pushforward}). Let $f: \X\to \Y$ be a proper morphism of Deligne-Mumford stacks. Denote by $f_*: CF(\X)\to CF(\Y)$ the pushforward as in (\ref{pushforward}), and by $f_{*'}: CF(\X)\to CF(\Y)$ the pushforward in \cite{o}, Section 2.6 after the identification (\ref{CF_identification}). By construction we also have functorialty property $(f\circ g)_{*'}=f_{*'}g_{*'}$. Given $\phi\in CF(\X)$, to compare $f_*\phi$ and $f_{*'}\phi$ it suffices to compare them pointwise. Therefore we  may assume that $f$ is of the form $f: BG\to BH$ given by a homomorphism $G\to H$ of finite groups, and $\phi=\bone_{BG}$. 

In case $G$ is trivial, i.e. $f: \text{pt}\to BH$, we have $f_*\bone_{\text{pt}}=|H|\bone_{BH}$ by Lemma \ref{etale_map}. Let $H$ acts on itself by translations. Then we may present the map $f$ as a quotient by $H$ of the constant map $\tilde{f}: H\to \text{pt}$. Then it follows from the definitions that $f_{*'}\bone_{\text{pt}}=\tilde{f}_{*}\bone_{H}=|H| \bone_{\text{pt}}=|H| \bone_{BH}$. Thus $f_*=f_{*'}$ in this case. 

Suppose $G$ is not necessarily trivial. Let $p: \text{pt}\to BG$ be an atlas of $BG$. Then we have $p_*\bone_{\text{pt}}=|G|\bone_{BG}=p_{*'}\bone_{\text{pt}}$ and $(f\circ p)_*\bone_{\text{pt}}=|H|\bone_{BH}=(f\circ p)_{*'}\bone_{\text{pt}}$ by the case above. Thus by functoriality of pushforward, we have 
\begin{equation*}
f_*\bone_{BG}=\frac{1}{|G|}f_*p_*\bone_{\text{pt}}=\frac{1}{|G|}(f\circ p)_*\bone_{\text{pt}}=\frac{1}{|G|}(f\circ p)_{*'}\bone_{\text{pt}}=\frac{1}{|G|}f_{*'}p_{*'}\bone_{\text{pt}}=f_{*'}\bone_{BG}, 
\end{equation*}
which is what we want.

\section{Chern classes}
Let $\Delta: \X\to \X\times \X$ be the diagonal morphism. Recall that the {\em inertia stack} of $\X$ is defined to be $I\X:=\X\times_{\Delta,\X\times \X, \Delta}\X$, see for example \cite{dflnu}, Definition 5.1. and Lemma 5.2. Let $p:I\X\to \X$ be the natural projection.

If $\X=[M/G]$ where $M$ is a scheme and $G$ is a finite group, then the inertia stack can be described as follows: $$I[M/G]=\coprod_{(g): \text{conjugacy class of }G} [M^g/C_G(g)].$$
See for example \cite{dflnu}, Lemma 5.6 for a proof of this fact. 

The following is well-known.
\begin{lemma}[c.f. \cite{av}, Lemma 2.2.3]\label{local_cover}
Let $\X$ be a separated Deligne-Mumford stack, and $X$ its coarse moduli space. There is an \'etale covering $\coprod_a X_a\to X$ such that for each $a$ there is a scheme $U_a$ and a finite group $G_a$ acting on $U_a$, such that $\X\times_X X_a\simeq [U_a/G_a]$. 
\end{lemma}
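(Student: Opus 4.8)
The plan is to reduce to a purely local assertion and then reassemble. For each geometric point $\bar x$ of $X$ I will produce an \'etale morphism $X_{\bar x}\to X$ whose image contains $\bar x$, a scheme $U_{\bar x}$ equipped with an action of a finite group $G_{\bar x}$, and an isomorphism $\X\times_X X_{\bar x}\simeq[U_{\bar x}/G_{\bar x}]$ over $X_{\bar x}$. The disjoint union $\coprod_{\bar x}X_{\bar x}\to X$ (or a finite subfamily, since $X$ is of finite type and \'etale morphisms are open) is then the required covering.

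First I would lift $\bar x$ to a geometric point $\bar\xi$ of $\X$; this is possible because $\pi\colon\X\to X$ is a coarse moduli map, hence surjective on geometric points. Put $G_{\bar x}:=\underline{\mathrm{Aut}}(\bar\xi)$, which is finite since $\X$ is Deligne--Mumford. Pick an \'etale atlas $W\to\X$ and a point $\bar w\in W$ over $\bar\xi$, so that $\X$ is presented by the \'etale groupoid $W\times_\X W\rightrightarrows W$ whose fiber over $(\bar w,\bar w)$ is canonically $G_{\bar x}$. The crucial step---this is exactly the content of \cite{av}, Lemma 2.2.3---is to promote this groupoid data to a genuine action: after replacing $W$ by a suitable affine \'etale neighborhood $U_{\bar x}$ of $\bar w$, the group $G_{\bar x}$ acts on $U_{\bar x}$, the geometric quotient $X_{\bar x}:=U_{\bar x}/G_{\bar x}$ exists as a scheme, the map $X_{\bar x}\to X$ is \'etale, and $U_{\bar x}\to\X$ descends to a representable \'etale morphism $[U_{\bar x}/G_{\bar x}]\to\X$. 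Since $[U_{\bar x}/G_{\bar x}]\to X$ factors through $X_{\bar x}$, one gets a morphism $[U_{\bar x}/G_{\bar x}]\to\X\times_X X_{\bar x}$, which is \'etale (both sides being \'etale over $\X$); a comparison on coarse moduli spaces and on the fiber over $\bar x$ then shows it is an isomorphism.

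I expect the main obstacle to be exactly the passage from the groupoid to an honest $G_{\bar x}$-action on a single \'etale neighborhood: a priori the automorphisms act compatibly only on the strict henselization of $W$ at $\bar w$, i.e.\ on a cofiltered system of \'etale neighborhoods, and descending this to one neighborhood that is simultaneously small enough and $G_{\bar x}$-stable requires a limit/approximation argument. Everything afterwards---existence of the quotient scheme, the \'etaleness statements, and the identification $\X\times_X X_{\bar x}\simeq[U_{\bar x}/G_{\bar x}]$---is formal. As the lemma is quoted as well known, in practice I would simply invoke \cite{av}, Lemma 2.2.3 for the local charts and record only this reduction together with the coarse-moduli compatibility.
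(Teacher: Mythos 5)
The paper gives no proof of this lemma at all: it records it as well known and cites \cite{av}, Lemma 2.2.3, which is exactly what your proposal ultimately does. Your preliminary sketch (lifting a geometric point, taking $G_{\bar x}$ to be its automorphism group, promoting the groupoid $W\times_\X W\rightrightarrows W$ to an honest $G_{\bar x}$-action on a small \'etale neighborhood via a henselization/limit argument, and then comparing $[U_{\bar x}/G_{\bar x}]$ with $\X\times_X X_{\bar x}$) is a correct outline of the standard argument behind that reference, so your treatment is consistent with, and slightly more detailed than, the paper's.
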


\begin{proposition}\label{const_function_equality}
Let $\X$ be as in Assumption \ref{1st_assumption}. Then
$$\pi_*p_*\bone_{I\X}=\bone_X.$$
\end{proposition}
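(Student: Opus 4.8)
The plan is to reduce the statement to a purely local computation on an étale cover and then evaluate the pushforward on each local chart. By Lemma~\ref{local_cover}, choose an étale covering $\coprod_a X_a \to X$ such that $\X \times_X X_a \simeq [U_a/G_a]$ for a scheme $U_a$ with an action of a finite group $G_a$. Since $\pi_* p_* \bone_{I\X}$ and $\bone_X$ are constructible functions on $X$, and equality of constructible functions may be checked on an étale cover (the constructions of pushforward and characteristic functions are compatible with étale base change, cf. \cite{j}, \cite{o}), it suffices to prove the identity after pulling back along each $X_a \to X$. Inertia stacks commute with base change, so $I\X \times_X X_a \simeq I([U_a/G_a])$, and we are reduced to the case $\X = [U/G]$ with $G$ finite, $U$ a scheme, and $X = U/G$ the quotient; write $\pi: [U/G] \to U/G$ for the structure map and $p: I[U/G] \to [U/G]$ for the projection.

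Next I would use the explicit description $I[U/G] = \coprod_{(g)} [U^g/C_G(g)]$, the coproduct running over conjugacy classes $(g)$ of $G$. Accordingly $\bone_{I\X} = \sum_{(g)} \bone_{[U^g/C_G(g)]}$, and by additivity and functoriality of pushforward ($(\pi\circ p)_* = \pi_* p_*$) we must show
\[
\sum_{(g)} (\pi\circ p)_* \bone_{[U^g/C_G(g)]} = \bone_{U/G}.
\]
For a fixed conjugacy class $(g)$, the composite $[U^g/C_G(g)] \hookrightarrow [U/G] \xrightarrow{\pi} U/G$ can be analyzed pointwise over a geometric point $x \in U/G$ with preimage a $G$-orbit of some $u \in U$. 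Using formula~(\ref{pushforward}) for the pushforward — which weights by the ratio of isotropy orders — the value at $x$ is $e_{U/G}(x)$ times the weighted Euler characteristic of the fiber, and since $e_{U/G}$ on a coarse space is identically $1$ (coarse moduli spaces have trivial isotropy as schemes), this reduces to counting the fiber of $[U^g/C_G(g)]$ over $x$ weighted by $1/e$. Concretely, the points of $[U^g/C_G(g)]$ lying over $x$ correspond to pairs consisting of a point $u$ in the $G$-orbit with $u \in U^g$, modulo $C_G(g)$, and the isotropy there is the stabilizer; summing $1/e$ over these and then summing over conjugacy classes $(g)$ amounts to the class equation for the stabilizer group $G_u$ of $u$. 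Indeed, for each $h \in G_u$, the element $h$ fixes $u$, so $(u,h)$ contributes to exactly one inertia component, and the total weighted count collapses via $\sum_{(g) \text{ in } G_u}\, |(g)| / |C_{G_u}(g)|\cdot(\text{appropriate isotropy factor}) = 1$. One must be careful to track the interplay between $C_G(g)$ and $C_{G_u}(g)$ and the number of ways a point of the $G$-orbit meets $U^g$; the bookkeeping is exactly the orbit-counting identity $\sum_{(g)} |\{u' \in Gu : u' \in U^{g}\}|/|C_G(g)|$ and the claim is that this equals $1$.

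The main obstacle I anticipate is precisely this last combinatorial bookkeeping: carefully setting up the pushforward formula~(\ref{pushforward}) on the non-representable morphism $[U^g/C_G(g)] \to U/G$ — which factors through a gerbe-like structure over its image — and verifying that the weighted contributions over each point of $U/G$ sum to exactly $1$ after summing over all conjugacy classes. The key simplification, which I would invoke to keep this clean, is that since $U/G$ is a scheme we have $e_{U/G}\equiv 1$, so the whole computation is intrinsic to the finite group $G_u$ acting on a point; there the statement becomes the elementary fact that $\sum_{(g): \text{conj. classes of } G_u} 1/|C_{G_u}(g)| \cdot |C_{G_u}(g)|/|G_u| \cdot |(g)| = \sum_{(g)} |(g)|/|G_u| = 1$, i.e. the class equation $\sum_{(g)} |(g)| = |G_u|$. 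Reassembling the local identities along the étale cover then yields $\pi_* p_* \bone_{I\X} = \bone_X$.
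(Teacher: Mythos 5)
Your argument is correct, but it follows a different route from the paper's main proof --- in fact it essentially coincides with the alternative argument the paper sketches in the Remark immediately after Proposition~\ref{const_function_equality}. After the same reduction to $\X=[U/G]$ via Lemma~\ref{local_cover}, you evaluate $(\pi\circ p)_*\bone_{I\X}$ fiberwise over a geometric point $x\in U/G$ and reduce to the weighted point count $\sum_{(g)}|U^g\cap G\cdot u|/|C_G(g)|=1$, i.e.\ to Burnside/orbit--stabilizer, or equivalently (after passing to the fiber $\coprod_{(g)\subset G_u}[\mathrm{pt}/C_{G_u}(g)]$) to the class equation $\sum_{(g)}1/|C_{G_u}(g)|=1$; this is exactly the paper's Remark. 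The paper's own proof instead works on the atlas $\rho: M\to[M/G]$ with the averaged function $\alpha=\frac{1}{|G|}\sum_{g\in G}\bone_{M^g}$, computes $(\pi\circ\rho)_*\alpha=\bone_{M/G}$ by the calculation in Ohmoto's Proposition~6.1, shows via Lemma~\ref{etale_map} that $\alpha$ pushes forward to $\bone_{I[M/G]}$, and concludes by functoriality of the pushforward; this keeps the whole argument at the level of constructible functions on the atlas and ties in directly with the equivariant (Ohmoto) framework that is reused in Theorem~\ref{csm_class}, whereas your version is more elementary and pointwise. One small criticism: your key displayed identity with the unspecified ``appropriate isotropy factor'' is garbled as written, and the genuine bookkeeping issue you flag (elements of $G_u$ conjugate in $G$ but not in $G_u$, and how often the orbit meets $U^g$) is acknowledged but not carried out; the clean way to close it is either Burnside's lemma, $\frac{1}{|G|}\sum_{g\in G}|U^g\cap G\cdot u|=\frac{1}{|G|}\sum_{u'\in G\cdot u}|G_{u'}|=1$, or the fiber description plus the class equation --- with that inserted, the proof is complete.
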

 
\begin{proof}
The question is local on $X$. Lemma \ref{etale_map} allows one to replace $X$ by an \'etale cover. In view of Lemma \ref{local_cover}, we are reduced to the case $\X=[M/G]$ where $M$ is a scheme and $G$ is a finite group. Denote by $\rho: M\to [M/G]$ the atlas map, and $\pi:[M/G]\to M/G$ the map to coarse moduli scheme. 

Put $\alpha:=\frac{1}{|G|}\sum_{g\in G}\bone_{M^g}$. For a geometric point $x\in M$ we denote by $[x]$ the corresponding geometric point in $M/G$. The calculation in the proof of \cite{o}, Proposition 6.1 gives 
\begin{equation*}
\begin{split}
(\pi\circ\rho)_*\alpha([x])
&=\frac{1}{|G|}\sum_{g\in G}(\pi\circ\rho)_*\bone_{M^g}([x])\\
&=\frac{1}{|G|}\sum_{x'\in G.x}\sum_{g\in G}\bone_{M^g}(x')\\
&=\frac{1}{|G|}|G.x||\text{Stab}_x(G)|=1.
\end{split}
\end{equation*}
Thus $(\pi\circ\rho)_*\alpha=\bone_{M/G}$. It is easy to see that $$\frac{1}{|G|}\sum_{g\in G}\bone_{M^g}=\sum_{(g): \text{conjugacy class}}\frac{\bone_{M^g}}{|C_G(g)|}.$$
By Lemma \ref{etale_map}, we see that the pushforward of $\bone_{M^g}$ to $[M^g/C_G(g)]$ is equal to $|C_G(g)|\bone_{[M^g/C_G(g)]}$. Therefore the pushforward of $\alpha$ to $I[M/G]$ is equal to $\bone_{I[M/G]}$. The result follows.
\end{proof}

\begin{remark}
In the proof of Proposition \ref{const_function_equality} one can argue without using Lemmas \ref{local_cover} and \ref{etale_map}, as follows: Let $x: \text{Spec}\, k\to X$ be a geometric point. Then $\text{Spec}\, k\times_{x,X, \pi}\X$ is isomorphic to $BG$ for some finite group $G$. Moreover, we have 
$$\text{Spec}\, k\times_{x, X, \pi\circ p}I\X\simeq \coprod_{(g): \text{conjugacy class of }G} [\text{Spec}\, k/C_G(g)].$$
We conclude by using the equality $$\sum_{(g): \text{conjugacy class of }G}\frac{1}{|C_G(g)|}=1.$$
\end{remark}

\begin{theorem}\label{csm_class}
Let $\X$ be as in Assumption \ref{1st_assumption}. Then
$$\pi_*p_*c(T_{I\X})=c^{SM}(X).$$
\end{theorem}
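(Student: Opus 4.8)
The plan is to deduce Theorem \ref{csm_class} from Proposition \ref{const_function_equality} by applying MacPherson's natural transformation $c_*$ for Deligne-Mumford stacks, as developed in \cite{o}. Recall that $c_*$ is a natural transformation from the functor $CF$ of constructible functions to the Chow-group (or homology) functor, characterized by the normalization property that $c_*(\bone_{\cZ}) = c(T_{\cZ}) \cap [\cZ]$ for $\cZ$ smooth, together with covariance under proper pushforward. The CSM class of the possibly-singular variety $X$ is by definition $c^{SM}(X) := c_*(\bone_X)$.

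First I would apply $c_*$ to the equality $\pi_* p_* \bone_{I\X} = \bone_X$ from Proposition \ref{const_function_equality}. Since $\pi \circ p : I\X \to X$ is proper, covariance of $c_*$ gives $(\pi\circ p)_* c_*(\bone_{I\X}) = c_*(\pi_* p_* \bone_{I\X}) = c_*(\bone_X) = c^{SM}(X)$. Next I would invoke the normalization property: because $\X$ is smooth by Assumption \ref{1st_assumption}, the inertia stack $I\X$ is also smooth (each component $[M^g/C_G(g)]$ in the local description is smooth, as $M^g$ is smooth when $M$ is), so $c_*(\bone_{I\X}) = c(T_{I\X}) \cap [I\X]$. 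Combining, $(\pi\circ p)_*(c(T_{I\X}) \cap [I\X]) = c^{SM}(X)$, which is precisely the asserted identity $\pi_* p_* c(T_{I\X}) = c^{SM}(X)$ once one identifies $\pi_* p_*$ on Chern classes with the proper pushforward along $\pi \circ p$.

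The one point requiring care — and the main obstacle — is the bookkeeping of the several notions of pushforward in play. The pushforward $f_*$ on constructible functions used in \cite{j} differs a priori from the pushforward $f_{*'}$ of \cite{o} under which MacPherson's transformation is natural, and the argument at the end of Section 2 of the excerpt is exactly what reconciles them: it shows $f_* = f_{*'}$ for proper morphisms of Deligne-Mumford stacks. So I would be careful to phrase Proposition \ref{const_function_equality} using the pushforward for which $c_*$ is covariant (these agree, by that argument), and then the chain of equalities above goes through verbatim. A secondary bookkeeping point is that the Chow-theoretic pushforward along $\pi$ and along $p$ are the usual proper pushforwards in the stacky Chow theory of \cite{vis}, and that $c_*$ of \cite{o}, formulated via equivariant Chow groups for quotient presentations, is compatible with these; this is guaranteed by the local structure (Lemma \ref{local_cover}) and the comparison of Chow theories quoted in the Preliminaries. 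With these identifications the proof is essentially the one-line application of naturality described above.
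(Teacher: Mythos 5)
Your proposal follows essentially the same route as the paper: apply the naturality of MacPherson's transformation for Deligne--Mumford stacks (\cite{o}) to the identity $\pi_*p_*\bone_{I\X}=\bone_X$ of Proposition \ref{const_function_equality}, then use the normalization property on the smooth stack $I\X$ to convert $C_*(\bone_{I\X})$ into $c(T_{I\X})\cap[I\X]$, with the Section~2 comparison of the Joyce and Ohmoto pushforwards taking care of the bookkeeping. The one point where your justification does not hold up is the claim that the applicability of Ohmoto's theory to $I\X$ is ``guaranteed by the local structure (Lemma \ref{local_cover})'': Ohmoto's transformation is constructed for \emph{global} quotient stacks $[U/G]$ with $G$ a linear algebraic group, via equivariant constructible functions and equivariant Chow/homology groups, so to invoke it (both the naturality step and the normalization $C_*^H(\bone_W)=c^H(T_W)\cap[W]_H$) one needs a global presentation $I\X\simeq[W/H]$ with $W$ quasi-projective; the \'etale-local finite-group quotient structure of Lemma \ref{local_cover} does not provide this. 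The paper supplies the missing ingredient by citing Kresch's theorem (\cite{k}, Theorem 4.4), which under Assumption \ref{1st_assumption} (projective coarse moduli space) gives exactly such a presentation. With that substitution your argument matches the paper's proof.
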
 
 
\begin{proof}
Assumption \ref{1st_assumption} on $\X$ implies that $I\X$ is a quotient stack $[W/H]$ of a quasi-projecitve scheme $W$ by a linear algebraic group $H$ (see \cite{k}, Theorem 4.4). This allows us to apply \cite{o}, Theorem 3.5 to Proposition \ref{const_function_equality} to obtain $$\pi_*p_*C_*(\bone_{I\X})=c^{SM}(X)\cap [X].$$

The function $\bone_{I\X}$ is identified with $\bone_W$ under the identification $CF(I\X)\simeq CF^H_{inv}(W)$ of groups of constructible functions. This implies that $C_*(\bone_{I\X})=C_*^H(\bone_W)$. 
The normalization property of $C_*^H$ implies that $C_*^H(\bone_W)=c^H(T_W)\cap [W]_H$. Since the $H$-equivariant Chern class $c^H(T_W)$ is identified with the Chern class $c(T_{I\X})$ under the identification $H_H^*(W)\simeq H^{*}(I\X)$, the result follows.
\end{proof}
 
\section{Stringy Chern classes} 
Let $IX$ be the coarse moduli scheme of the inertia stack $I\X$. There is a diagram
 $$\begin{CD}
I\X@> p>>  \X \\
@V{\tpi}VV @V{\pi}VV \\
IX@>\pbar>> X.
\end{CD}$$

In \cite{dflnu} the authors define a constructible function $\Phi_X$ and the {\em stringy Chern class} of $X$: $$c_{str}(X):=c^{SM}(\Phi_X).$$



\begin{theorem}\label{stringy_class}
Let $\X$ be as in Assumption \ref{1st_assumption}. Then
\hfill
\begin{enumerate}
\item
$\Phi_X=\pbar_*\bone_{IX}$.
\item
$c_{str}(X)=\pi_*q_*c(T_{II\X})$, where $q: II\X\to \X$ is the natural projection from the {\em double inertia stack} $II\X$ to $\X$.
\end{enumerate}
\end{theorem}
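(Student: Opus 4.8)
The plan is to treat the two assertions in parallel, since part (2) is essentially part (1) applied one level up the inertia tower combined with Theorem \ref{csm_class}.

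\smallskip

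\noindent\textbf{Part (1).} First I would recall the definition of $\Phi_X$ from \cite{dflnu}. There $\Phi_X$ is built from the age/degree-shifting grading on the components of $I\X$: roughly, on a component $[M^g/C_G(g)]$ of $I[M/G]$ one assigns a locally constant integer (the age of $g$, which under the Gorenstein hypothesis is integral), and $\Phi_X$ is the pushforward to $X$ of the corresponding combination of characteristic functions of the components of $IX$, with $\bone$ weighted by these ages. The claim $\Phi_X = \pbar_*\bone_{IX}$ then amounts to checking that the weighting is actually trivial when everything is pushed to the coarse space $IX$ first — in other words, that the age weights already get absorbed. The honest content is the identity on constructible functions: I would argue locally on $X$, reduce via Lemma \ref{local_cover} to $\X=[M/G]$ with $G$ finite, and compute both sides pointwise over a geometric point $[x]\in X$ exactly as in the proof of Proposition \ref{const_function_equality}. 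The key input is that the coarse moduli scheme $IX$ of $I\X$ is the disjoint union over conjugacy classes $(g)$ of the coarse spaces $M^g/C_G(g)$, so $\bone_{IX} = \sum_{(g)}\bone_{M^g/C_G(g)}$, and $\pbar_*$ of each summand is computed by the same fixed-point count $\frac1{|G|}|G.x|\,|\mathrm{Stab}_x(G)|$ weighted appropriately; matching this against the formula for $\Phi_X$ in \cite{dflnu} gives the equality. The one subtlety to be careful about is the Gorenstein/log-terminal hypothesis, which is exactly what guarantees $\Phi_X$ is a well-defined \emph{integer-valued} constructible function and makes the comparison legitimate.

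\smallskip

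\noindent\textbf{Part (2).} Here I would proceed in three steps. Step one: observe that the double inertia stack $II\X := I(I\X)$ has as its coarse moduli scheme the inertia scheme of $I\X$, and that the natural projection $q: II\X\to\X$ factors as $II\X \xrightarrow{p_{I\X}} I\X \xrightarrow{p} \X$ where $p_{I\X}$ is the structure projection of the inertia stack of $I\X$. Step two: apply Theorem \ref{csm_class}, with $\X$ there replaced by the smooth Deligne–Mumford stack $I\X$ and $X$ replaced by its coarse space $IX$ — this is legitimate since $I\X$ is again smooth separated Deligne–Mumford of finite type with projective (quotient-singular) coarse space, so Assumption \ref{1st_assumption} holds for the pair $(I\X, IX)$ — to get
\begin{equation*}
(\tpi)_* (p_{I\X})_* c(T_{II\X}) = c^{SM}(IX).
\end{equation*}
Step three: push forward further by $\pbar: IX\to X$ and use commutativity $\pbar\circ\tpi = \pi\circ p$ together with functoriality of pushforward of constructible functions and of MacPherson's transformation (which is built into the fact that $c^{SM}$ is a natural transformation). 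Combining, $\pi_* q_* c(T_{II\X}) = \pbar_* c^{SM}(IX) = c^{SM}(\pbar_*\bone_{IX}) = c^{SM}(\Phi_X) = c_{str}(X)$, where the middle equality is naturality of $c^{SM}$ and the next-to-last is part (1). One should double-check that $c^{SM}(IX)$ as produced by Theorem \ref{csm_class} is literally $c^{SM}(\bone_{IX})=C_*(\bone_{IX})$, which it is by the normalization property used in that proof.

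\smallskip

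\noindent\textbf{Main obstacle.} The technically delicate point is part (1): matching the author's combinatorial definition of $\Phi_X$ in \cite{dflnu} — which a priori carries the age-shift data component by component — against the plain pushforward $\pbar_*\bone_{IX}$, and verifying that the Gorenstein hypothesis makes the ages integral so that the identity holds on the nose with $\mathbb{Q}$-coefficients. Everything in part (2) is then a formal consequence of part (1), Theorem \ref{csm_class} applied to $I\X$, and functoriality; the only thing to verify there is that $(I\X, IX)$ genuinely satisfies Assumption \ref{1st_assumption} (smoothness of $I\X$ follows from smoothness of $\X$ since $I\X$ is locally a disjoint union of fixed loci $M^g$, each smooth, and projectivity of $IX$ is inherited from that of $X$).
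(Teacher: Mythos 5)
Your part (2) is correct and is essentially the paper's argument spelled out: the paper notes that $II\X=I(I\X)$ and says part (2) ``follows immediately from Theorem \ref{csm_class} applied to $IX$'', which compresses exactly your steps (apply Theorem \ref{csm_class} to the pair $(I\X, IX)$, push forward along $\pbar$, use naturality of MacPherson's transformation together with part (1)). Your checks that $(I\X,IX)$ satisfies Assumption \ref{1st_assumption} are appropriate.

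The gap is in part (1), and it stems from a mischaracterization of $\Phi_X$. In \cite{dflnu} the function $\Phi_X$ is \emph{not} defined via age-weighted characteristic functions on the components of $I\X$ or $IX$; it is defined intrinsically on the singular variety $X$ (log terminality suffices, which quotient singularities always have --- the Gorenstein/integral-age condition plays no role in part (1)) by means of a resolution $f\colon Y\to X$ and the discrepancies in $K_{Y/X}$. If $\Phi_X$ were already expressed through the inertia stack, part (1) would be nearly tautological; that it is not is the whole content of the statement. Consequently, your proposed pointwise computation ``as in Proposition \ref{const_function_equality}'' only evaluates the right-hand side $\pbar_*\bone_{IX}$ and says nothing about $\Phi_X$. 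Two genuine inputs are missing from your plan: (i) a justification that the identity may be checked after étale base change on $X$ --- since $\Phi_X$ is built from a resolution and its discrepancies, one must show the relative canonical divisor pulls back correctly under étale maps; the paper proves $f'^*K_{W/V}=K_{W'/V'}$ and then invokes \cite{dflnu}, Proposition 2.3 for exactly this; and (ii) the global-quotient case $X=M/G$ itself, which is not a routine fixed-point count but a McKay-correspondence-type theorem at the level of constructible functions, quoted by the paper as \cite{dflnu}, Theorem 6.1. With (i) and (ii) supplied or cited, your reduction via Lemma \ref{local_cover} goes through as in the paper.
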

\begin{proof}
Let $g: W\to V$ be a resolution of singularity, $f: V'\to V$ an \'etale map, $W':=W\times_V V'$, and $f': W'\to W$, $g': W'\to V'$ the natural projections. Then $g'$ is also a resolution of singularity, and we have 
\begin{equation*}
\begin{split}
f'^*K_{W/V}&=f'^*K_W-f'^*g^*K_V\\
&= K_{W'}-g'^*f^*K_V \quad (\text{since }f' \text{ is \'etale})\\
&=K_{W'}-g'^*K_{V'},
\end{split}
\end{equation*}
where the last step is justified as follows: Let $j: U\to V$ be the smooth locus of $V$, $U':=U\times_V V'$, and $j': U'\to V'$, $f_U: U'\to U$ the natural projections. Then we have $K_V:=j_*(\wedge^{\text{dim}\,U}\Omega_U^1)$. Thus
\begin{equation*}
f^*K_V=f^*j_*(\wedge^{\text{dim}\,U}\Omega_U^1)=j'_*f_U^*(\wedge^{\text{dim}\,U}\Omega_U^1)=j'_*(\wedge^{\text{dim}\,U'}\Omega_{U'}^1)=K_{V'}.
\end{equation*}
It follows that $f'^*K_{W/V}=K_{W'/V'}$. Applying \cite{dflnu}, Proposition 2.3, we see that part (1) can be checked on an \'etale covering of $X$. Therefore by Lemma \ref{local_cover} we may assume that $X=M/G$ for some scheme $M$ and finite group $G$. In this case part (1) is \cite{dflnu}, Theorem 6.1. 

Since the inertia stack of $I\X$ is by definition the double inertia stack $II\X$, part (2) follows immediately from Theorem \ref{csm_class} applied to $IX$.
\end{proof}

Let $e_{str}(X)$ be the stringy Euler characteristic of $X$. By \cite{dflnu}, Proposition 4.4, we have $e_{str}(X)=\int_Xc_{str}(X)$. The following is immediate from Theorem \ref{stringy_class}.

\begin{corollary}\label{euler_char}
$e_{str}(X)=\int_{II\X}c_{top}(T_{II\X})$.
\end{corollary}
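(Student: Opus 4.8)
The plan is to derive Corollary \ref{euler_char} as a direct consequence of Theorem \ref{stringy_class}(2) together with the Gauss--Bonnet formula for Deligne--Mumford stacks. First I would recall that $e_{str}(X)=\int_X c_{str}(X)$ by \cite{dflnu}, Proposition 4.4, so that it suffices to compute $\int_X c_{str}(X)$ in terms of $II\X$. By Theorem \ref{stringy_class}(2) we have $c_{str}(X)=\pi_*q_*c(T_{II\X})$, so I would substitute this and use that degree is preserved under proper pushforward for Deligne--Mumford stacks, giving
\begin{equation*}
\int_X c_{str}(X)=\int_X \pi_*q_*c(T_{II\X})=\int_{II\X} c(T_{II\X}).
\end{equation*}
The point here is that only the top-degree component of $c(T_{II\X})$ survives integration over $II\X$, and $II\X$ is smooth of pure dimension equal to $\dim\X=\dim X$ on each component (being the inertia stack of the smooth stack $I\X$, which itself has components of the same dimension as $\X$), so the top Chern class $c_{top}(T_{II\X})$ is the relevant piece.

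Next I would invoke the Gauss--Bonnet formula for Deligne--Mumford stacks, exactly as quoted in the introduction for $I\X$: for a smooth proper Deligne--Mumford stack $\cS$ one has $\chi(\cS)=\int_{\cS}c_{top}(T_\cS)$. Applying this with $\cS=II\X$ is not strictly needed for the statement as written — the statement only claims $e_{str}(X)=\int_{II\X}c_{top}(T_{II\X})$ — so in fact the bare computation above already yields the result once we observe $\int_{II\X}c(T_{II\X})=\int_{II\X}c_{top}(T_{II\X})$, since all lower-degree terms integrate to zero against the fundamental class of a stack of the appropriate dimension. I would state this dimension-count carefully, component by component, using the quotient-stack description $I[M/G]=\coprod_{(g)}[M^g/C_G(g)]$ and iterating it to describe $II\X$, noting that each $M^g$ is smooth (as $\X$ is smooth) but possibly of varying dimension across components, so ``top Chern class'' means the top-degree component on each connected component.

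The main obstacle — really the only subtlety — is the bookkeeping around dimension: $I\X$ and $II\X$ are in general not equidimensional (their components corresponding to different conjugacy classes can have different dimensions), so one must be slightly careful about what $c_{top}$ means and why $\int c(T_{II\X})=\int c_{top}(T_{II\X})$ holds. This is resolved by working one connected component at a time: on each such component $\cS_i$, which is a smooth proper Deligne--Mumford stack of some pure dimension $d_i$, we have $\int_{\cS_i}c(T_{\cS_i})=\int_{\cS_i}c_{d_i}(T_{\cS_i})$, and $c_{top}(T_{II\X})$ is by definition the class restricting to $c_{d_i}(T_{\cS_i})$ on $\cS_i$. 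Summing over components and combining with Theorem \ref{stringy_class}(2) and \cite{dflnu}, Proposition 4.4 completes the proof; everything else is formal functoriality of pushforward and the projection formula for stacks.
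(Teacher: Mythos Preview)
Your approach is exactly the paper's: the paper simply says the corollary ``is immediate from Theorem \ref{stringy_class}'' after recording $e_{str}(X)=\int_X c_{str}(X)$ from \cite{dflnu}, Proposition 4.4, and you unpack precisely this. One small inconsistency to fix: you first assert that $II\X$ (and $I\X$) has every component of dimension $\dim\X$, which is false in general --- the fixed loci $M^g$ can have strictly smaller dimension --- and then in the next paragraph you correctly say $I\X$ and $II\X$ are typically not equidimensional and handle $c_{top}$ componentwise. Drop the first (incorrect) claim and keep only the componentwise discussion; with that edit the argument is clean and matches the paper.
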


Let $n=\text{dim}\, X$. In \cite{b}, Definition 3.1, Batyrev defined a number $c_{st}^{1,n-1}(X)$, which can be interpreted as a stringy analogue of the Chern number $c_1(X)c_{top-1}(X)$. Properties of $c_{str}(X)$ (see the proof of \cite{dflnu}, Proposition 4.4) implies that
$$c_{st}^{1, n-1}(X)=\int_X c_1(X) c_{str}(X).$$

Theorem \ref{stringy_class} implies 

\begin{corollary}
$$c_{st}^{1,n-1}(X)=\int_{II\X}q^*\pi^*c_1(X)c_{top-1}(T_{II\X}).$$
\end{corollary}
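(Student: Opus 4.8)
The plan is to combine Theorem \ref{stringy_class}(2) with the projection formula for the tower $II\X \xrightarrow{q} \X \xrightarrow{\pi} X$. Starting from the identity $c_1(X) = c_1(T_X)$ (where $T_X$ makes sense on the smooth locus, and $c_1$ is taken via the canonical class so that $c_1(X) \in A^1(X)_\rQ$; this is precisely the class Batyrev's $c_{st}^{1,n-1}(X)$ pairs against), I would write
\begin{equation*}
c_{st}^{1,n-1}(X) = \int_X c_1(X)\, c_{str}(X) = \int_X c_1(X) \cap \bigl(\pi_* q_* c(T_{II\X})\bigr),
\end{equation*}
using the interpretation of $c_{st}^{1,n-1}(X)$ recalled just before the corollary and Theorem \ref{stringy_class}(2).

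Next I would apply the projection formula twice. Since $\pi$ and $q$ are proper, for any class $\beta \in A_*(II\X)_\rQ$ we have $\pi_* q_*(\,(q^*\pi^* c_1(X)) \cap \beta\,) = c_1(X) \cap \pi_* q_* \beta$. Taking $\beta$ to be the component of $c(T_{II\X}) \cap [II\X]$ that contributes in the relevant degree, this rewrites the integrand as a pushforward from $II\X$, so that
\begin{equation*}
\int_X c_1(X) \cap \bigl(\pi_* q_* c(T_{II\X})\bigr) = \int_{II\X} q^*\pi^* c_1(X)\, c(T_{II\X}).
\end{equation*}
Because $\dim II\X = \dim X = n$, only the degree-$n$ part of the product $q^*\pi^* c_1(X)\, c(T_{II\X})$ survives the integral; since $q^*\pi^* c_1(X)$ has degree $1$, the surviving term of $c(T_{II\X})$ is $c_{n-1}(T_{II\X}) = c_{top-1}(T_{II\X})$. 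This yields exactly the claimed formula.

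The one point requiring care — and the main obstacle — is the validity of the projection formula in the stacky setting, i.e. for the pushforward $\pi_*$ on Chow groups of Deligne-Mumford stacks (with $\rQ$-coefficients) used throughout: one needs $\pi_*(\gamma \cap \pi^*\delta) = (\pi_*\gamma) \cap \delta$ for $\pi$ proper. This is part of the formalism of \cite{vis} and holds after tensoring with $\rQ$; alternatively, since $I\X$ and hence $II\X$ are quotient stacks by Assumption \ref{1st_assumption} and \cite{k}, Theorem 4.4, one can check the identity in equivariant Chow groups \cite{eg}, where the projection formula is standard. One should also confirm that the degree/integration conventions (the "$\int$" over a stack being $\frac{1}{|G|}$-weighted, matching the pushforward of constructible functions in (\ref{pushforward})) are consistent on both sides, but this is automatic since both sides are obtained from the \emph{same} class $c_{str}(X)$ on $X$ via Theorem \ref{stringy_class}(2). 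No genuinely new computation is needed beyond these bookkeeping checks.
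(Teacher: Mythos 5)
Your argument is correct and is essentially the paper's own: the corollary is treated there as immediate from Theorem \ref{stringy_class}(2) combined with the identity $c_{st}^{1,n-1}(X)=\int_X c_1(X)\,c_{str}(X)$ recalled just before it, exactly as you do via the projection formula for the proper maps $q$ and $\pi$. The only slight imprecision is the claim $\dim II\X=\dim X$; the double inertia stack has components of varying dimensions, so the degree count extracting $c_{top-1}(T_{II\X})$ should be read component by component, which is precisely what the notation $c_{top-1}$ means here.
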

\begin{remark}
This proves a more general form of \cite{jt}, Conjecture A.2.
\end{remark}

Under additional hypotheses we can deduce some consequences on orbifold Hodge numbers.

\begin{assump}\label{additional_assump}
Let $\X$ be as in Assumption \ref{1st_assumption}. In addition $X$ is assumed to be Gorenstein, the map $\pi: \X\to X$ is assumed to be birational, and $K_\X=\pi^*K_X$.
\end{assump}

Let $I\X=\coprod_{i\in\sI}\X_i$ be the decomposition into disjoint union of connected components. For each $\X_i$ one can associate a rational number $age(\X_i)$ called the {\em age} of $\X_i$. See for example \cite{ya} for a definition.

The numbers $age(\X_i)$, which arise naturally in the context of Riemann-Roch formula for twisted curves (see \cite{agv}, Section 7.2), are relevant to us due to their presence in the {\em Chen-Ruan orbifold cohomology}. By definition, the Chen-Ruan cohomology groups of $\X$ are $H_{CR}^*(\X,\com):=H^*(I\X, \com)=\oplus_{i\in \sI}H^*(\X_i, \com)$. The numbers $age(\X_i)$ are used to define a new grading on $H_{CR}^*(\X, \com)$:
$$H_{CR}^p(\X,\com):=\oplus_{i\in \sI} H^{p-2age(\X_i)}(\X_i, \com).$$
The Dolbeault cohomology version of this can be similarly defined: 
$$H_{CR}^{p,q}(\X, \com):=\oplus_{i\in \sI} H^{p-age(\X_i), q-age(\X_i)}(\X_i, \com).$$

\begin{proposition}\label{orb_numbers}
Let $\X$ be as in Assumption \ref{additional_assump}. Then the following holds.
\begin{equation}\label{c_st_1n-1}
c_{st}^{1,n-1}(X)=\int_{II\X}q^*c_1(\X)c_{top-1}(T_{II\X}).
\end{equation}
\begin{equation}\label{trace}
\sum_{i\in \sI}\sum_{p\geq 0}(-1)^p\left(p+age(\X_i)-\frac{\text{dim}\X}{2}\right)^2\chi(\X_i,\Omega_{\X_i}^p)=\frac{1}{12}\int_{II\X}\text{dim}\X c_{top}(T_{II\X})+2c_1(T_\X)c_{top-1}(T_{II\X}).
\end{equation}

\end{proposition}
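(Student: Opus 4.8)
The plan is to derive both formulas as consequences of the comparison theorems already established, together with the two standard compatibilities that are invoked in the hypotheses of Assumption \ref{additional_assump}: first, that $c^{SM}$ of a smooth variety is the pushforward of its total Chern class, and second, the identity $K_\X=\pi^*K_X$, which I will use to replace pullbacks of $c_1(X)$ by $c_1(\X)=-c_1(K_\X)$. For the first formula (\ref{c_st_1n-1}), I would start from the Corollary just above, namely $c_{st}^{1,n-1}(X)=\int_{II\X}q^*\pi^*c_1(X)\,c_{top-1}(T_{II\X})$, and observe that under $K_\X=\pi^*K_X$ we have $\pi^*c_1(X)=c_1(\X)$ as a class in $A^*(\X)_\mathbb{Q}$, so the two integrands agree. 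The only point needing care is that $c_1(X)$ for the singular Gorenstein variety $X$ is defined via $K_X$ (a genuine Cartier divisor class, since $X$ is Gorenstein), so $\pi^*c_1(X)$ makes sense and equals $-c_1(K_\X)=c_1(T_\X)$ precisely because $\pi$ is birational and crepant; this is exactly the content of Assumption \ref{additional_assump}.

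For the second formula (\ref{trace}), the strategy is to interpret the left-hand side as an orbifold (Chen-Ruan) Hirzebruch-Riemann-Roch quantity and match it to the right-hand side term by term. The key observation is that $\sum_{p\ge 0}(-1)^p\chi(\X_i,\Omega_{\X_i}^p)=\int_{\X_i}c_{top}(T_{\X_i})=\chi(\X_i)$, and more refined weighted sums of the $\chi(\X_i,\Omega^p_{\X_i})$ compute the $\chi_y$-genus of each component $\X_i$, whose coefficients are Chern numbers of $T_{\X_i}$ by Hirzebruch-Riemann-Roch for smooth Deligne-Mumford stacks. The shift $p+age(\X_i)-\tfrac{\dim\X}{2}$ is precisely the Chen-Ruan bigrading centered at the middle dimension, so the sum over $i\in\sI$ and over $p$ of $(-1)^p\big(p+age(\X_i)-\tfrac{\dim\X}{2}\big)^2\chi(\X_i,\Omega^p_{\X_i})$ is the coefficient of the Chen-Ruan $\chi_y$-polynomial $E_{str}(X;y,1)$ picked out by applying $(y\tfrac{d}{dy})^2$ and setting $y=1$ — i.e. the ``stringy'' version of $\sum_p(-1)^p(p-n/2)^2 h^p$. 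Since $II\X=I(I\X)$ is the inertia stack of $I\X$, and $I\X$ is itself a smooth Deligne-Mumford stack whose coarse space is $IX$, I would apply the HRR computation on $I\X$ (or directly on $II\X$ via Corollary \ref{euler_char} and its $c_1$-twisted analogue) to express this quantity as a combination of $\int_{II\X}c_{top}(T_{II\X})$ and $\int_{II\X}c_1(T_{II\X})c_{top-1}(T_{II\X})$. The classical identity $\int_Y\operatorname{Td}_2\cdot(\text{lower Chern data})$ combined with the relation $12\,\mathrm{Td}_2=c_1^2+c_2$ then produces the coefficients $\tfrac{1}{12}\dim\X$ and the cross term $2c_1(T_\X)c_{top-1}(T_{II\X})$; here one uses $q^*c_1(T_\X)=c_1(T_{I\X})$ up to the age-correction, which is exactly why $c_1(T_\X)$ (pulled back) rather than $c_1(T_{II\X})$ appears.

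The main obstacle, and the step I expect to require the most care, is the bookkeeping of the age shifts and the identification of the left-hand sum with a genuine Chen-Ruan characteristic-number quantity that feeds into Riemann-Roch on $II\X$ rather than on $I\X$. Concretely: the double inertia stack $II\X$ decomposes into components indexed by pairs of commuting conjugacy data, each carrying its own age, and one must check that $\sum_{i\in\sI}\big(p+age(\X_i)-\tfrac{n}{2}\big)^2$-weighted Euler data on the components of $I\X$ re-assembles, after a second application of the inertia construction, into integrals over the components of $II\X$ with the correct Todd-class coefficients — in particular that the ``middle'' shift $-\dim\X/2$ matches the normalization under which $c_1(T_\X)$ appears without an extra age term. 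I would handle this by reducing, as in the proof of Theorem \ref{stringy_class}, to the local model $\X=[M/G]$ with $G$ finite, where $I\X=\coprod_{(g)}[M^g/C_G(g)]$ and $II\X=\coprod_{(g,h)}[M^{g,h}/C_G(g,h)]$, and then invoking the ordinary (equivariant) Hirzebruch-Riemann-Roch on each smooth fixed-point locus together with the standard age-additivity relations; the global statement follows by the étale-local and functoriality arguments already used repeatedly in the preceding sections.
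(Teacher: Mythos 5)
Your treatment of (\ref{c_st_1n-1}) is fine and is the same as the paper's: the identity follows from the earlier corollary $c_{st}^{1,n-1}(X)=\int_{II\X}q^*\pi^*c_1(X)c_{top-1}(T_{II\X})$ together with $K_\X=\pi^*K_X$.

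For (\ref{trace}), however, there is a genuine gap, and it sits exactly in the step you defer as ``bookkeeping.'' First, your key observation $\sum_{p\ge 0}(-1)^p\chi(\X_i,\Omega^p_{\X_i})=\int_{\X_i}c_{top}(T_{\X_i})$ is false for Deligne--Mumford stacks: already for $\X_i=BG$ the left side is $1$ while the right side is $1/|G|$. Sheaf Euler characteristics on a stack are governed by Kawasaki--To\"en Riemann--Roch, whose twisted-sector contributions involve the full eigenvalue data of the stabilizer action (not just ages), so expressing the age-weighted $\chi_y$-type sum on the left of (\ref{trace}) as the specific combination $\frac{1}{12}\int_{II\X}(\dim\X\, c_{top}(T_{II\X})+2c_1(T_\X)c_{top-1}(T_{II\X}))$ is not a routine Libgober--Wood/Todd-class computation on $I\X$ or $II\X$; it is essentially the content of the statement. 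Second, and more tellingly, your argument for (\ref{trace}) never uses the hypotheses of Assumption \ref{additional_assump} ($\pi$ birational and $K_\X=\pi^*K_X$); but without those hypotheses the identity is precisely \cite{jt}, Conjecture 3.2$'$, which the remark following the proposition records as open for general smooth proper Deligne--Mumford stacks. So a proof along your lines would have to solve that general conjecture, and the reduction to the local model $[M/G]$ plus ``standard age-additivity'' does not do so. The paper's proof uses the crepancy hypothesis in an essential way: by Yasuda's theorem \cite{ya}, $h^{p,q}_{st}(X)=h^{p,q}_{orb}(\X)$, which transports the problem to the stringy side of the \emph{variety} $X$; there Batyrev's Corollary 3.10 of \cite{b} (the stringy Libgober--Wood identity, established via log resolutions where ordinary Hirzebruch--Riemann--Roch applies) gives $\sum_{p,q}(-1)^{p+q}(p-\tfrac{\dim\X}{2})^2h^{p,q}_{orb}(\X)=\frac{\dim X}{12}e_{str}(X)+\frac{1}{6}c_{st}^{1,n-1}(X)$; the left side is then rewritten through the age-shifted Chen--Ruan grading (using $age(\X_i)\in\mathbb{Z}$, from the Gorenstein hypothesis), and finally Corollary \ref{euler_char} and (\ref{c_st_1n-1}) are substituted on the right. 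None of these ingredients -- Yasuda's comparison, Batyrev's identity, or the earlier stringy-class results -- appear in your sketch, and they are what make the argument go through without proving the general conjecture.
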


\begin{proof}
(\ref{c_st_1n-1}) follows from $K_\X=\pi^*K_X$.

We now prove (\ref{trace}). Under Assumption \ref{additional_assump} a result of T. Yasuda \cite{ya} asserts that Batyrev's {\em stringy Hodge numbers} $h_{st}^{p,q}(X)$ coincide with {\em orbifold Hodge numbers} $h_{orb}^{p,q}(\X):=\text{dim}H_{CR}^{p,q}(\X, \com)$. We refer to \cite{ya} for relevant definitions. In terms of generating functions, we have $$E_{st}(X, s,t)=E_{orb}(\X,s,t),$$ where $$E_{st}(X,s,t):=\sum_{p, q\geq 0}(-1)^{p+q}h_{st}^{p,q}(X)s^pt^q$$ is the stringy E-polynomial and $$E_{orb}(\X,s,t):=\sum_{p, q\geq 0}(-1)^{p+q}h_{orb}^{p,q}(\X)s^pt^q$$ is the orbifold E-polynomial. Combining this with Corollary 3.10 of \cite{b} we find that
\begin{equation}
\sum_{p, q}(-1)^{p+q}\left(p-\frac{\text{dim}\X}{2}\right)^2h_{orb}^{p,q}(\X)=\frac{\text{dim}X}{12}e_{str}(X)+\frac{1}{6}c_{st}^{1,n-1}(X).
\end{equation}
We rewrite the left-hand side as follows. By definition of $h_{orb}^{p,q}(\X)$, 
\begin{equation*}
\begin{split}
&\sum_{p, q}(-1)^{p+q}\left(p-\frac{\text{dim}\X}{2}\right)^2h_{orb}^{p,q}(\X)\\
=&\sum_{i\in\sI} \sum_{p, q}(-1)^{p+q}\left(p-\frac{\text{dim}\X}{2}\right)^2\text{dim}H^{p-age(\X_i), q-age(\X_i)}(\X_i, \com)\\
=&\sum_{i\in\sI} \sum_{p, q\geq 0}(-1)^{p+q+2age(\X_i)}\left(p+age(\X_i)-\frac{\text{dim}\X}{2}\right)^2\text{dim}H^{p, q}(\X_i, \com) \quad (\text{re-indexing})\\
=&\sum_{i\in\sI} \sum_{p\geq 0}(-1)^p\left(p+age(\X_i)-\frac{\text{dim}\X}{2}\right)^2\left(\sum_{q\geq 0}(-1)^q\text{dim}H^{p, q}(\X_i, \com)\right). 
\end{split}
\end{equation*}
In the last equality we used the fact that $age(\X_i)\in \mathbb{Z}$, which is true because $X$ is Gorenstein. Thus we arrive at 
\begin{equation}\label{evidence}
\sum_{i\in \sI}\sum_{p\geq 0}(-1)^p\left(p+age(\X_i)-\frac{\text{dim}\X}{2}\right)^2\chi(\X_i,\Omega_{\X_i}^p)=\frac{\text{dim}X}{12}e_{str}(X)+\frac{1}{6}c_{st}^{1,n-1}(X).
\end{equation}
(\ref{trace}) now follows from Corollary \ref{euler_char} and (\ref{c_st_1n-1}).
\end{proof}

\begin{remark}
(\ref{trace}) is conjectured to hold for any smooth proper Deligne-Mumford stack with projective coarse moduli space, see \cite{jt}, Conjecture 3.2'.
\end{remark}


\begin{thebibliography}{12}
\bibitem{agv} D. Abramovich, T. Graber, and A. Vistoli, Gromov-Witten theory of Deligne-Mumford stacks,  {\em Amer. J. Math.} 130 (2008) no. 5, 1337--1398.

\bibitem{av} D. Abramovich and A. Vistoli, Compactifying the space of stable maps, {\em J. Amer. Math. Soc.} 15 (2002), no. 1, 27--75.

\bibitem{alu} P. Aluffi, Celestial integration, stringy invariants, and Chern-Schwartz-MacPherson classes, in {\em Real and complex singularities}, 1--13, Trends Math., Birkh\"auser, Basel, 2007.

\bibitem{b} V. Batyrev, Stringy Hodge numbers and Virasoro algebra, {\em Math. Res. Lett.} 7 (2000), no. 2-3, 155--164.

\bibitem{dflnu}  T. de Fernex, E. Lupercio, T. Nevins, and B. Uribe, Stringy Chern classes of singular varieties, {\em Advances in Math.} 208 (2007), 597-621, arXiv:math/0407314.

\bibitem{eg} D. Edidin, W. Graham, Equivariant intersection theory,  {\em Invent. Math.}  131  (1998),  no. 3, 595--634. 

\bibitem{jt} Y. Jiang and H.-H. Tseng, On Virasoro Constraints for Orbifold Gromov-Witten Theory,  arXiv:0704.2009.

\bibitem{j} D. Joyce, Constructible functions on Artin stacks, {\em J. London Math. Soc.} (2) 74 (2006), no. 3, 583--606.

\bibitem{k} A. Kresch, On the geometry of Deligne-Mumford stacks, to appear in {\em Algebraic Geometry (Seattle 2005)}, Proc. Symp. Pure Math., Vol. 80, Amer. Math. Soc. 2009.

\bibitem{o} T. Ohmoto, Equivariant Chern classes of singular algebraic varieties with group actions,  {\em Math. Proc. Cambridge Philos. Soc.} 140 (2006), no. 1, 115--134. 

\bibitem{vis} A. Vistoli, Intersection theory on algebraic stacks and on their moduli spaces, {\em Invent. Math.}  97  (1989),  no. 3, 613--670.

\bibitem{ya} T. Yasuda, Motivic integration over Deligne-Mumford stacks, {\em Advances in Math.} 207 (2006), 707--761.
\end{thebibliography}
\end{document}